\documentclass[12pt]{article}

\usepackage{amsmath}
\usepackage{amssymb}
\usepackage{amsthm}
\newtheorem{theorem}{Theorem}

\begin{document}

\begin{center}
{\bf\Large Can brains generate random numbers?}\\
\vspace{0.5cm}
 Va\v sek Chv\' atal (Concordia University, Montreal)\footnote{Canada Research Chair in Discrete Mathematics}\\
 Mark Goldsmith (Concordia University, Montreal)
\end{center}

\begin{center}
{\bf Abstract}
\end{center} 
\vspace{-0.3cm}
{\small 
Motivated by EEG recordings of normal brain activity, we construct
arbitrarily large McCulloch-Pitts neural networks that, without any
external input, make every subset of their neurons fire in some
iteration (and therefore in infinitely many iterations). 
}

\section{Introduction}\label{sec.intro}
Epilepsy is a group of neurologic conditions, the common and
fundamental characteristic of which is recurrent, unprovoked epileptic
seizures. These seizures are transient changes in attention or
behavior, often accompanied by convulsions; they result from
excessive, abnormal firing patterns of neurons that are located
predominantly in the cerebral cortex (the convoluted outer layer of
gray matter that covers each cerebral hemisphere). Such abnormal
paroxysmal activity is usually intermittent and
self-limited.~\cite[p.2] {EP97}.  The World Health Organization
reports~\cite{WHO} that there are over 50 million epilepsy sufferers
in the world today, 85\% of whom live in developing countries.

In attempts to study epilepsy, selected patients are monitored
continuously for days at a time. During these periods, EEG
(electroencephalogram) or ECoG (electrocorticogram) recordings are
made. EEG recordings come from placing multiple electrodes on the
scalp of the patient. ECoG recordings produce far more accurate data,
but they require invasive surgery to place a grid of electrodes
directly on the cortex. 

Neurologists specialized in epilepsy are trained to read EEG/ECoG
recordings, so that mere visual inspection allows them to tell with a
reasonable degree of accuracy when a seizure might have occured.
There are a number of different types of seizures; two major
categories are {\em partial seizures\/} (originating in a small group
of neurons, called a seizure focus, and spreading to other brain
regions) and {\em generalized seizures\/} (showing simultaneous
disruption of normal brain activity in both cerebral hemispheres from
the onset) ~\cite{Wes00}.  The classification~\cite{ILAE81} developed
by the International League Against Epilepsy in 1981 divides each of
these two categories into several subcategories, some of which are
divided into subsubcategories or even further and these different
types of seizures have different EEG
manifestations~\cite{Fis99,Nie99}.  One frequent occurrence is a
transition from an irregular, disorderly EEG before the seizure (the
pre-ictal state) to more organized sustained rhythm of spikes or sharp
waves during the seizure (the ictal state) ~\cite[Chapter
2]{BroHol04}.  (Some researchers refer to the pre-ictal EEG informally
as `chaotic' or `random' in contrast with the rigorous definitions of
these modifiers in mathematics, where --- roughly speaking ---
`chaotic' means `highly dependent on initial conditions' and `random'
means `unpredictable'.)

In July 2009, in a seminar held at Concordia, Nithum Thain asked
whether some initial configuration could cause Conway's Game of
Life~\cite{Gar70} to evolve in a way resembling a partial seizure,
proceeding from an erratic flutter of apparently unpredictable
patterns to sustained rhythmic changes that would begin in a small
part of the grid and gradually spread, synchronized, over a larger
area before subsiding to give way to the initial erratic mode. In the
discussion that followed, a variation emerged: Could a McCulloch-Pitts
neural network behave like this?

To define these networks, we need first the notion of a {\em linear
  threshold function.\/} This is a function $f:{\bf R}^n \rightarrow \{0,1\}$
such that, for some real numbers $w_1,w_2,\ldots,w_n$ (mnemonic for
``weights'') and $\theta$ (mnemonic for ``threshold''),
\[
f(x_1,x_2,\ldots,x_n)=
\left\{
\begin{array}{lll}
1 & \mbox {if } \sum_{j=1}^n  w_jx_j \ge \theta,\\
0 & \rule{0pt}{16pt}\mbox {otherwise.}
\end{array}
\right.
\]
The function is thought of as a neuron with zero-one signals
$x_1,x_2,\ldots,x_n$ received at its synapses from the axons of other
neurons; positive weights correspond to excitatory synapses and
negative weights correspond to inhibitory synapses;
$f(x_1,x_2,\ldots,x_n)=1$ means that the neuron, given signals
$x_1,x_2,\ldots,x_n$ at time $t$, will fire (send the signal `one')
along its axon after a synaptic delay at time $t+1$.

A {\em McCulloch-Pitts neural network\/}~\cite{McCPit43}, with
nonnegative integer parameters $p$ and $n$ such that $p<n$, is a
collection of linear threshold functions
\[
f_i:\{0,1\}^n \rightarrow \{0,1\} \;\;\;\;\;\;\;\;\; (i=p+1,p+2,\ldots ,n).
\]
Given any  zero-one vector $s_{p+1},s_{p+2},\ldots, s_n$ (the initial
state of the network) and 
any sequence $\xi_1,\xi_2,\ldots,\xi_p$ of $p$ functions
\[
\xi_r:{\bf N} \rightarrow \{0,1\}\;\;\;\;\;\;\;\;\;
(r=1,2,\ldots ,p),
\]
it computes a sequence $x_{p+1},x_{p+2},\ldots,x_n$ of $n-p$ functions
\[
x_i:{\bf N} \rightarrow \{0,1\} \;\;\;\;\;\;\;\;\;
(i=p+1,p+2,\ldots ,n.)
\]
This is done by setting $x_i(0)=s_i$ and, for all nonnegative integers
$t$,
\[
x_i(t+1)= f_i(\xi_1(t),\ldots,\xi_p(t),x_{p+1}(t),\ldots,x_n(t)).
\]
We think of variable $t$ as marking discrete time; each of the $n-p$
neurons $p+1,p+2,\ldots ,n$ may receive its signals from any of the
$n$ neurons; it receives a signal from neuron $j$ if and only if its
$w_{j}$ is nonzero. The bits $\xi_r(t)$ with $r=1,2,\ldots p$ and the
bits $x_i(t)$ with $i=p+1,p+2,\ldots ,n$ tell us which neurons are
firing at time $t$; firing or not firing of a neuron at time $t+1$
depends on the signals arriving to it at time $t$.  Neurons
$1,2,\ldots p$ receive no signals; they have no axons synapsing upon
them; McCulloch and Pitts call them `the peripheral afferents' of the
network. We will restrict ourselves to McCulloch-Pitts neural networks
with no such peripheral afferents; to put it differently, we will set
$p=0$.  The entire network is then a function 
$\Phi:\{0,1\}^n\rightarrow \{0,1\}^n$ defined by
\[
\Phi(x) = (f_1(x), f_2(x), \ldots, f_n(x)) 
\]
and $(x_1(t),x_2(t),\ldots x_n(t))=\Phi^t(s_1,s_2\ldots ,s_n)$ for
all nonnegative integers $t$.

The McCulloch-Pitts concept of an artificial neuron has been
generalized to allow firing at intensities on a continuous scale
rather than in the all-or-none way: instead of being a linear
threshold function, each $f_i:{\bf R}^n \rightarrow{\bf R}$ is now defined by 
\[
\textstyle{f_i(x_1,x_2,\ldots,x_n)=\varphi_i(\sum_{j=1}^n w_{ij}x_j)}
\]
with some `transfer function' $\varphi_i:{\bf R} \rightarrow{\bf R}$; the
McCulloch-Pitts original special case has $\varphi_i(s)=H(s-\theta_i)$, where
$H$ is the {\em Heaviside step function\/} defined by 
\[
H(d)=
\begin{cases}
1 &\mbox{when } d\ge 0,\\
0 &\mbox{when } d<0.
\end{cases}
\]
All such mathematical abstractions of biological neurons are only
crude approximations of their actual behaviour. More credible models
are {\em spiking neurons,\/} which were anticipated by
Lapicque~\cite{Lap07,Abb99} long before the mechanisms of the
generation of neuronal action potentials were known; later on, a
different model was proposed by Hodgkin and Huxley \cite{HH52} and
subsequently elaborated by many other researchers~\cite{Tuc88,
  RWRB97,KocSeg98,MaaBis01,GerKis02}.

We restrict ourselves to the McCulloch-Pitts model (with no peripheral
afferents) in all its simplicity: it played a seminal role in the
development of artificial neural networks~\cite{Roj96,SRK95}
and even today it is routinely referenced in medical literature
(\cite{Hut11,MGS11,Yus11} are just three of the more recent
citations). We do not pretend that our findings have any biological
significance, but we hope that they may serve as a template for
generalizations to more realistic models of the brain, such as
networks of spiking neurons.

\section{The question}\label{sec.que}
Could a McCulloch-Pitts neural network simulate a partial seizure?  We
have replaced this question by its easier variation: Could such a
network simulate the pre-ictal state of a brain?  To put it
differently, are there irregular, disorderly, apparently unpredictable
McCulloch-Pitts networks?  An essential prerequisite of every such
network $\Phi:\{0,1\}^n\rightarrow \{0,1\}^n$ is that, starting from any state
$s$ in its domain $\{0,1\}^n$, it eventually produce every state in
this domain as an element of the trajectory $s, \Phi(s), \Phi^2(s),
\ldots\;$.  This means that the {\em period of $\Phi$\/}, defined as
the smallest $t$ such that $\Phi^{t+1}(s)=s$ for some $s$ in its
domain, equals the size of the domain. This observation leads us to
ask an even easier question: Are there McCulloch-Pitts networks
$\Phi:\{0,1\}^n\rightarrow \{0,1\}^n$ with period $2^n$?  (The experimental
study~\cite{MBCHPR02} of trajectories in randomly generated
McCulloch-Pitts networks is to some extent related to this question;
lengths of state cycles in Boolean networks, cellular automata, and
other finite dynamical systems are often interpreted as a measure of
their computational power \cite{LegMaa05}.) The result reported here
is that the answer is affirmative for every positive integer $n$. (In
addition, we have found a number of properties that threshold
functions, and tuples of threshold functions, must possess in order to
define such networks. These will be the subject of a subsequent
paper.)

\section{The answer}\label{sec.ans}
Given a positive integer $n$, we define a mapping $\Phi_n:\{0,1\}^n\rightarrow
\{0,1\}^n$ by $\Phi_n(x_1,x_2,\ldots ,x_n)=(y_1,y_2,\ldots ,y_n)$
where, with $m$ the largest subscript such that $(x_1,x_2,\ldots
,x_m)$ is an alternating vector, $(0,1,0,1,\ldots)$ or 
$(1,0,1,0,\ldots)$,
\begin{equation}\label{eq.defphi}
y_i = 
\begin{cases}
\overline{x}_m &\mbox{when } 1\le i\le m,\\
x_i &\mbox{when } m<i\le n.\\
\end{cases}
\end{equation}
(Here, as usual, $\overline{x}$ with $x\in\{0,1\}$ denotes $1-x$.) For instance, 
\[
\begin{array}{l}
\Phi_4(0,0,0,0)=(1,0,0,0),\;\;\;\Phi_4(0,0,0,1)=(1,0,0,1),\\\Phi_4(0,0,1,0)=(1,0,1,0),\;\;\;\Phi_4(0,0,1,1)=(1,0,1,1),\\
\Phi_4(0,1,0,0)=(1,1,1,0),\;\;\;\Phi_4(0,1,0,1)=(0,0,0,0),\\\Phi_4(0,1,1,0)=(0,0,1,0),\;\;\;\Phi_4(0,1,1,1)=(0,0,1,1),\\
\Phi_4(1,0,0,0)=(1,1,0,0),\;\;\;\Phi_4(1,0,0,1)=(1,1,0,1),\\\Phi_4(1,0,1,0)=(1,1,1,1),\;\;\;\Phi_4(1,0,1,1)=(0,0,0,1),\\
\Phi_4(1,1,0,0)=(0,1,0,0),\;\;\;\Phi_4(1,1,0,1)=(0,1,0,1),\\\Phi_4(1,1,1,0)=(0,1,1,0),\;\;\;\Phi_4(1,1,1,1)=(0,1,1,1) 
\end{array}
\]
Note that the definition of $\Phi_n$ implies that $\Phi_n$ is antisymmetric in the sense of 
\begin{align}\label{eq.pr1}
&\mbox{$\Phi_n(\overline{x})=\overline{\Phi_n(x)}$ for all $x$ in $\{0,1\}^n$}
\end{align}
and that 
\begin{align}\label{eq.pr2}
&\Phi_1(0)=1
\end{align}
and that, when $n\ge 2$, 
\begin{align}\label{eq.pr3}
&\Phi_n(x_1,x_2,\ldots ,x_{n-1},0)=
\begin{cases}
(1,1,\ldots,1,1) \mbox{ if } (x_1,x_2,\ldots ,x_{n-1},0) \mbox{ is}\\ 
 \quad \mbox{ the alternating vector }(\ldots, 1,0,1,0),\\
(\Phi_{n-1}(x_1,x_2,\ldots ,x_{n-1}),0) \mbox{ otherwise.} \\
\end{cases}
\end{align}
\begin{theorem}\label{thm.main1}
The period of $\Phi_n$ is $2^n$. 
\end{theorem}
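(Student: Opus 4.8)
The plan is to prove by induction on $n$ that $\Phi_n$ is a single cyclic permutation of $\{0,1\}^n$, i.e. a single cycle of length $2^n$ through the whole state space; this is exactly the assertion that its period equals $2^n$. The base case $n=1$ is immediate from \eqref{eq.pr2} and \eqref{eq.pr1}: the trajectory is $0 \to 1 \to 0$, a single $2$-cycle. For the inductive step I would split $\{0,1\}^n$ into the set $A$ of vectors ending in $0$ and the set $B$ of vectors ending in $1$, each of size $2^{n-1}$, noting that the complementation map is a bijection $A \to B$ under which, by \eqref{eq.pr1}, the action of $\Phi_n$ on $B$ mirrors its action on $A$.

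The key observation is that \eqref{eq.pr3} describes the action on $A$ almost entirely in terms of $\Phi_{n-1}$. Away from the single alternating vector $a^{*}=(\ldots,1,0)$ of $A$, the map $\Phi_n$ sends $(x_1,\ldots,x_{n-1},0)$ to $(\Phi_{n-1}(x_1,\ldots,x_{n-1}),0)$, so under the bijection $A \cong \{0,1\}^{n-1}$ that drops the last coordinate it is conjugate to $\Phi_{n-1}$. By the induction hypothesis this conjugate copy is a single $2^{n-1}$-cycle on $A$, and within that cycle $a^{*}$ maps to $(0,\ldots,0)$: the first $n-1$ coordinates of $a^{*}$ form the length-$(n-1)$ alternating vector $\alpha$ ending in $1$, and $\Phi_{n-1}(\alpha)=(0,\ldots,0)$ because $\alpha$ is fully alternating.

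The single place where the true dynamics of $\Phi_n$ deviate from this copy is at $a^{*}$, where \eqref{eq.pr3} gives $\Phi_n(a^{*})=(1,\ldots,1)\in B$ instead of $(0,\ldots,0)$. Thus restricting $\Phi_n$ to $A$ amounts to deleting the edge $a^{*}\to(0,\ldots,0)$ from the $2^{n-1}$-cycle, leaving a Hamiltonian path through $A$ that starts at $(0,\ldots,0)$ and ends at $a^{*}$, after which $\Phi_n$ exits to $(1,\ldots,1)$. Applying \eqref{eq.pr1} to this path yields the complementary statement on $B$: a Hamiltonian path from $(1,\ldots,1)$ to $\overline{a^{*}}$, after which $\Phi_n(\overline{a^{*}})=\overline{\Phi_n(a^{*})}=(0,\ldots,0)\in A$.

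Splicing the two paths produces the single closed trajectory $(0,\ldots,0)\to\cdots\to a^{*}\to(1,\ldots,1)\to\cdots\to\overline{a^{*}}\to(0,\ldots,0)$, which visits all $2^{n-1}+2^{n-1}=2^{n}$ states exactly once, giving the desired $2^{n}$-cycle. The only real subtlety, and the point I would be most careful about, is the bookkeeping of the two exit and re-entry points: one must verify that the edge removed from the $A$-cycle is precisely $a^{*}\to(0,\ldots,0)$ and that $\Phi_n$ re-enters $A$ exactly at $(0,\ldots,0)$ (symmetrically for $B$ at $(1,\ldots,1)$), so that the two Hamiltonian paths fuse into one $2^{n}$-cycle rather than into two disjoint cycles. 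For $n\ge 2$ the vectors $a^{*}$ and $(0,\ldots,0)$ are distinct, so no degeneracy occurs and the induction goes through.
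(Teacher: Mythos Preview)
Your proof is correct and follows essentially the same inductive route as the paper: induction on $n$ via properties \eqref{eq.pr1}, \eqref{eq.pr2}, \eqref{eq.pr3}, splitting $\{0,1\}^n$ by the last bit and using antisymmetry to mirror the $A$-half onto the $B$-half. The paper packages the bookkeeping slightly differently by strengthening the induction hypothesis to record that $\Phi_{n-1}^{2^{n-1}-1}(0,\ldots,0)$ is the alternating vector $(\ldots,0,1,0,1)$, whereas you recover this fact on the fly by computing $\Phi_{n-1}(\alpha)=(0,\ldots,0)$ directly from the definition; both are equivalent ways of pinning down the unique exit point $a^{*}$.
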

\begin{proof}
  Straightforward induction on $n$, using properties (\ref{eq.pr1}),
  (\ref{eq.pr2}), (\ref{eq.pr3}), proves a finer statement: The $2^n$
  vectors $\Phi^t_n(0,0,\ldots ,0)$ with $t=0,1,\ldots ,2^n-1$ are
  pairwise distinct and $\Phi^t_n(0,0,\ldots ,0)$ with $t=2^n-1$ is
  the alternating vector $(\ldots, 0,1,0,1)$.
\end{proof}
\begin{theorem}\label{thm.main2}
  For every positive integer $n$ there are linear threshold
  functions
\[
f_{n,i}:\{0,1\}^n \rightarrow \{0,1\} \;\;\;\;\;\;\;\;\;
(i=1,2,\ldots ,n)
\]
such that 
\begin{equation}\label{phimcp}
\mbox{$\Phi_n(x) = (f_{n,1}(x), f_{n,2}(x), \ldots, f_{n,n}(x))$ for all $x$ in $\{0,1\}^n$.} 
\end{equation}
\end{theorem}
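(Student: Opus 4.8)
The plan is to prove Theorem~\ref{thm.main2} constructively: for each coordinate $i$ I will write down an explicit weight vector $(w_{i,1},\dots,w_{i,n})$ and threshold $\theta_i$, and then check that the resulting linear threshold function agrees with the $i$-th coordinate $y_i$ of $\Phi_n(x)$ on every $x\in\{0,1\}^n$, reading $y_i$ off the definition (\ref{eq.defphi}): with $m$ the length of the maximal alternating prefix of $x$, one has $y_i=\overline{x}_m$ when $i\le m$ and $y_i=x_i$ when $i>m$. Before any casework I will exploit the antisymmetry (\ref{eq.pr1}): a linear threshold function whose weights are integers and whose threshold is placed symmetrically (so that the comparison with $\theta_i$ and the complemented comparison coincide on the integer-valued weighted sum) automatically satisfies $f_{n,i}(\overline{x})=\overline{f_{n,i}(x)}$. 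Since $\Phi_n$ has this same property, it halves the verification: it suffices to check the inputs $x$ with, say, $x_1=1$.

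The shape of the weights is dictated by the two-regime behaviour of $y_i$, and I expect it to be the following. For $j>i$ the weight $w_{i,j}$ is negative; the diagonal weight $w_{i,i}$ is positive when $i\ge 2$ (because whenever the length-$i$ prefix fails to alternate we simply output $y_i=x_i$, so $y_i$ is increasing in $x_i$) and negative when $i=1$ (where only the regime $y_1=\overline{x}_m$ occurs, and at $m=1$ this is $\overline{x}_1$); and for $j<i$ the sign of $w_{i,j}$ alternates with the parity of $i-j$, mirroring the alternating prefix. The coordinate $y_1$ is then the special monotone-decreasing case with all weights negative. I will fix the magnitudes to decay geometrically away from the governing position, so that among the positions that matter exactly the intended one dominates the weighted sum.

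Verification proceeds by fixing $i$ and splitting the inputs according to the location of the first \emph{break}, that is, the first index $k$ with $x_k=x_{k+1}$. There are three cases: (a) a break occurs at some gap $k<i$, so $m<i$ and the target value is $x_i$; (b) the length-$i$ prefix alternates but a break occurs at some gap $k\ge i$, so $m\ge i$ and the target value is $\overline{x}_m$; and (c) the whole vector alternates, so $m=n$ and the target value is $\overline{x}_n$. In each case I evaluate the sign of $\sum_{j}w_{i,j}x_j-\theta_i$ and match it against $y_i$, using the geometric decay to isolate the dominant contribution (the first break when the prefix alternates, and $x_i$ otherwise).

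The hard part will be showing that a \emph{single} linear threshold function implements both regimes at once. When the prefix alternates, the weights must let the first break dominate so that its complement $\overline{x}_m$ is reported; but as soon as the prefix breaks before position $i$, those leftward terms must cancel in consecutive pairs---which is precisely what the parity-alternating signs on the coordinates $j<i$ are designed to achieve---leaving $x_i$ in sole control. The genuinely delicate bookkeeping is that $\overline{x}_m$ depends on the parity of $m-i$ and on $x_1$, and the construction must absorb this parity through the chosen signs and decay rate. Checking that each $y_i$ is unate with the sign pattern above is a convenient necessary condition and a reliable guide while pinning down the magnitudes.
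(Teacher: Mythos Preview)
Your sign pattern is exactly right and matches the paper's construction: $w_{i,j}<0$ for $j>i$, $w_{i,i}>0$ for $i\ge 2$ (and $<0$ for $i=1$), and $\operatorname{sgn}w_{i,j}=(-1)^{i-j+1}$ for $j<i$.  The antisymmetry reduction and the case split on the position of the first break are also sound.  The gap is the magnitudes.  ``Geometric decay away from the governing position'' is not enough, and for $i=1$ it fails outright.  With all weights negative, write $c_j=-w_{1,j}>0$ and $T=-\theta_1$; you need $\sum_j c_jx_j\le T$ exactly when $x_m=0$.  Take $n\ge 5$ and compare the two inputs
\[
x=(1,0,0,1,1,\dots,1)\quad(\text{$m=2$, $x_m=0$, so need }\textstyle\sum c_jx_j\le T)
\]
and
\[
x'=(0,1,1,0,0,\dots,0)\quad(\text{$m=2$, $x_m=1$, so need }\textstyle\sum c_jx'_j> T).
\]
These force $c_1+\sum_{j\ge 4}c_j \le T < c_2+c_3$, hence $c_1-c_2<c_3-c_4-\sum_{j\ge 5}c_j$.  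For any genuinely geometric choice $c_j=r^{-j}$ with $r>1$ (so that the ``governing'' early positions dominate), the left side equals $r^{-2}(r-1)$ and the right side is at most $r^{-4}(r-1)$, which is strictly smaller.  So no such threshold $T$ exists.  The difficulty is intrinsic: for $y_1$ the ``intended dominating position'' is $m$ itself, which moves with the input, so a single geometric centre cannot serve all cases.

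The paper resolves this not by a direct closed form but by a two-term recursion in $n$: for $i\le n-2$ one sets $w_{n,i,j}=w_{n-1,i,j}+w_{n-2,i,j}$ for $j\le n-2$, $w_{n,i,n-1}=w_{n-1,i,n-1}$, $w_{n,i,n}=-1$, and $\theta_{n,i}=\theta_{n-1,i}+\theta_{n-2,i}-1$ (with separate easy formulas for $i=n-1,n$).  The resulting magnitudes are Fibonacci-like, not geometric; e.g.\ for $i=1$, $n=7$ one gets $|w_{1,\cdot}|=(5,8,5,3,2,1,1)$.  Correctness is then proved by induction on $n$ using the recursive description (\ref{eq.pr1})--(\ref{eq.pr3}) of $\Phi_n$ rather than by your direct case analysis on $m$: one checks antisymmetry via $\sum_j w_{n,i,j}=2\theta_{n,i}-1$, the value on the alternating vector, and the reduction $f_{n,i}(x_1,\dots,x_{n-1},0)=f_{n-1,i}(x_1,\dots,x_{n-1})$ for non-alternating inputs.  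Your casework could in principle be carried out too, but only after you replace ``geometric'' by weights with this additive (Fibonacci) structure; that is the missing idea.
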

\begin{proof}
  For every positive integer $n$ and for all $i=1, 2, \ldots, n$, we
  will construct weights $w_{n,i,\;j}$ ($j=1,2,\ldots ,n$) and threshold
  values $\theta_{n,i}$. Then we will define 
\[
f_{n,i}(x_1, \ldots, x_n) = \textstyle{H\left(\sum_{j=1}^nw_{n,i,\;j} x_j -\theta_{n,i}\right)}
\]
and prove that (\ref{phimcp}) is satisfied.

Our construction of $w_{n,i,\;j}$ and $\theta_{n,i}$ is recursive. To begin, we set
\[
w_{1,1,1}=-1,\; \theta_{1,1}=0; 
\]
for all integers $n$ greater than $1$, we set 
\begin{align*}
w_{n,n,\;j} & = \begin{cases}
1 &\mbox{if } j \not\equiv n \bmod 2,\\
-1 &\mbox{if } j \equiv n \bmod 2 \text{ and } j<n,\\
n-2 &\mbox{if } j=n,
\end{cases}\\
\theta_{n,n} & = \lfloor n/2 \rfloor,\\
&\\
w_{n,n-1,\;j} & = \begin{cases}
w_{n-1,n-1,\;j}  &\mbox{if } j\le n-2,\\
w_{n-1,n-1,\;j}+1 &\mbox{if } j=n-1,\\
-1 &\mbox{if } j=n,
\end{cases}\\
\theta_{n,n-1} & = \theta_{n-1,n-1}, 
\end{align*}
and, when $i=1, 2, \ldots, n-2$, 
\begin{align*}
w_{n,i,\;j} & = \begin{cases}
w_{n-1,i,\;j} + w_{n-2,i,\;j} &\mbox{if } j\le n-2,\\
w_{n-1,i,\;j} &\mbox{if } j=n-1,\\
-1 &\mbox{if } j=n,
\end{cases}\\
\theta_{n,i} & = \theta_{n-1,i} + \theta_{n-2,i}-1.
\end{align*}
Since the sequence $\Phi_1,\Phi_2,\Phi_3,\ldots$ is completely
determined by its properties (\ref{eq.pr1}), (\ref{eq.pr2}), (\ref{eq.pr3}), 
proving that (\ref{phimcp}) is satisfied reduces to proving that 
\begin{itemize}
\item[(i)] $f_{n,i}(\overline{x})=\overline{f_{n,i}(x)}$ for all $i=1,\ldots n$ and all $x$ in $\{0,1\}^n$,
\end{itemize}
observing that $f_{1,1}(0)=1$, and proving that
\begin{itemize}
\item[(ii)] if $x$ in $\{0,1\}^n$ is the alternating vector $(\ldots, 1,0,1,0)$,\\
then $f_{n,i}(x)=1$ for all $i=1,\ldots n$,
\item[(iii)] if $(x_1,\ldots ,x_{n-1},0)$ in $\{0,1\}^n$ is not the alternating vector $(\ldots, 1,0,1,0)$,\\
then $f_{n,n}(x_1,\ldots ,x_{n-1},0)=0$,
\item[(iv)] if $(x_1,\ldots ,x_{n-1},0)$ in $\{0,1\}^n$ is not the alternating vector $(\ldots, 1,0,1,0)$,\\
then $f_{n,i}(x_1,\ldots ,x_{n-1},0)=f_{n-1,i}(x_1,\ldots ,x_{n-1})$ for all $i=1,\ldots n-1$.
\end{itemize}
Straightforward, if a little tedious, induction on $n$ shows that 
\begin{align}
\sum_{j \not\equiv n \bmod 2} w_{n,i,\;j} &= \theta_{n,i}  &\mbox{ for all $i=1,\ldots n$,}\label{cleannoteq}\\
\sum_{j \equiv n \bmod 2} w_{n,i,\;j} &= \theta_{n,i} - 1 &\mbox{ for all $i=1,\ldots n$.}\label{cleaneq}
\end{align}
Summing up each pair of these equations, we conclude that
\[
\textstyle{\sum_{j=1}^n w_{n,i,\;j} = 2\theta_{n,i} - 1 \mbox{ for all $i=1,\ldots n$,}}
\]
which is easily seen to imply (i); equations (\ref{cleannoteq}) alone
imply directly (ii); proposition (iii) follows from the definitions of
$w_{n,n,\;j}$ and $\theta_{n,n}$.

In proving (iv), we will treat $i=n-1$ separately from $i\le n-2$.

To prove that $f_{n,n-1}(x_1,x_2,\ldots
,x_{n-1},0)=f_{n-1,n-1}(x_1,x_2\ldots ,x_{n-1})$ for all $(x_1,\ldots
,x_{n-1})$ in $\{0,1\}^{n-1}$ other than the alternating vector $(\ldots, 0,1,0,1)$, 
recall that
\[
\textstyle{\sum_{j=1}^{n-1}w_{n,n-1,\;j} x_j =\sum_{j=1}^{n-1}w_{n-1,n-1,\;j} x_j + x_{n-1}}
 \mbox{ and } \theta_{n,n-1} = \theta_{n-1,n-1},.
\] 
It follows that $f_{n,n-1}(x_1,\ldots ,x_{n-1},0)\ne
f_{n-1,n-1}(x_1,\ldots ,x_{n-1})$ if and only if $x_{n-1}=1$ and
$\sum_{j=1}^{n-1}w_{n-1,n-1,\;j} x_j + 1 = \theta_{n-1,n-1}$. Since
$w_{n-1,n-1,\;n-1} = n-3$ and $\theta_{n-1,n-1} = \lfloor (n-1)/2
\rfloor$, this means $\sum_{j=1}^{n-2}w_{n-1,n-1,\;j} x_j = -\lceil
(n-3)/2 \rceil$; since
\begin{align*}
w_{n-1,n-1,\;j} & = \begin{cases}
1 &\mbox{if } j \not\equiv n-1 \bmod 2,\\
-1 &\mbox{if } j \equiv n-1 \bmod 2 \text{ and } j<n-1,
\end{cases}
\end{align*}
this means further that $(x_1,\ldots ,x_{n-1})$ is the alternating vector $(\ldots, 0,1,0,1)$.

To prove that we have $f_{n,i}(x_1,\ldots
,x_{n-1},0)=f_{n-1,i}(x_1,\ldots ,x_{n-1})$ for all $i=1,\ldots , n-2$
and for all $(x_1,\ldots ,x_{n-1})$ in $\{0,1\}^{n-1}$ other than the
alternating vector $(\ldots, 0,1,0,1)$, we shall use induction on $n$.
In the induction step, we distinguish between two cases.

{\sc Case 1:} {\em $(x_1,\ldots ,x_{n-1})$ is the alternating vector
  $(\ldots, 1,0,1,0)$.}\\
In this case, we do not use the induction hypothesis. Equations
(\ref{cleaneq}) show that
\[
\textstyle{\sum_{j=1}^{n-1}} w_{n,i,\;j}x_j + w_{n,i,\;n} = \theta_{n,i}-1  \quad \quad \mbox{for all $i=1,\ldots n$;}
\]
since $w_{n,i,\;n}=-1$ for all $i=1,\ldots n-1$, it follows that 
\[
\textstyle{\sum_{j=1}^{n-1}} w_{n,i,\;j}x_j  = \theta_{n,i}  \quad \quad \mbox{for all $i=1,\ldots n-1$,}
\]
and so $f_{n,i}(x_1,\ldots ,x_{n-1},0)=1$ for all $i=1,\ldots ,
n-1$. By (ii) with $n-1$ in place of $n$, we have
$f_{n-1,i}(x_1,\ldots ,x_{n-1})=1$ for all $i=1,\ldots , n-1$.

{\sc Case 2:} {\em $(x_1,\ldots ,x_{n-1})$ is not the alternating
  vector $(\ldots, 1,0,1,0)$.}\\
In this case, consider an arbitrary $(x_1,\ldots ,x_{n-1},0)$ in
$\{0,1\}^n$ other than the alternating vector $(\ldots, 1,0,1,0)$.
The induction hypothesis guarantees (alone if $x_{n-1}=0$ and
combined with (i) if $x_{n-1}=1$) that $f_{n-1,i}(x_1,\ldots
,x_{n-1})=f_{n-2,i}(x_1,\ldots ,x_{n-2})$ for all $i=1,\ldots
n-2$. 

If $i\le n-2$ and $f_{n-1,i}(x_1,\ldots ,x_{n-1})=1$, then
$f_{n-2,i}(x_1,\ldots ,x_{n-2})=1$, and so
\begin{multline*}
\textstyle{\sum_{j=1}^{n-1} w_{n,i,\;j}x_j  = 
\sum_{j=1}^{n-1} w_{n-1,i,\;j}x_j  + \sum_{j=1}^{n-2} w_{n-2,i,\;j}x_j}\\  \ge \theta_{n-1,i} + \theta_{n-2,i} > \theta_{n,i}, 
\end{multline*}
which implies $f_{n,i}(x_1,\ldots ,x_{n-1},0)=1$. 

If $i\le n-2$ and $f_{n-1,i}(x_1,\ldots ,x_{n-1})=0$, then
$f_{n-2,i}(x_1,\ldots ,x_{n-2})=0$, and so
\begin{multline*}
\textstyle{\sum_{j=1}^{n-1} w_{n,i,\;j}x_j  = 
\sum_{j=1}^{n-1} w_{n-1,i,\;j}x_j  + \sum_{j=1}^{n-2} w_{n-2,i,\;j}x_j}\\  \le (\theta_{n-1,i}-1) + (\theta_{n-2,i}-1) < \theta_{n,i}, 
\end{multline*}
which implies $f_{n,i}(x_1,\ldots ,x_{n-1},0)=0$.
\end{proof}

Implicit in our proof of Theorem~\ref{thm.main1} is a simple way of
transforming each trajectory
\begin{equation}\label{eq.smalltr}
(0,0,\ldots ,0)\mapsto \Phi_n(0,0,\ldots
,0)\mapsto \ldots \Phi_n^{N-1}(0,0,\ldots ,0) 
\end{equation}
with $N=2^n$ into the trajectory
\begin{equation}\label{eq.bigtr}
(0,0,\ldots ,0)\mapsto \Phi_{n+1}(0,0,\ldots ,0)\mapsto
\ldots \Phi_{n+1}^{2N-1}(0,0,\ldots ,0): 
\end{equation}
First append $0$ as the last bit to each point of the trajectory
(\ref{eq.smalltr}) and let $T$ denote the resulting sequence of $2^n$
vectors $(x_1,\ldots ,x_{n},0)$ in $\{0,1\}^{n+1}$; then flip every bit
($0\leftrightarrow 1$) of every vector in $T$ and let $\overline{T}$
denote the resulting sequence of $2^n$ vectors $(x_1,\ldots
,x_{n},1)$ in $\{0,1\}^{n+1}$; the trajectory (\ref{eq.bigtr}) is the
concatenation $T\overline{T}$. For instance, if $n=3$, then
(\ref{eq.smalltr}) is
\begin{align*}
& (0,0,0)  \mapsto (1,0,0) \mapsto (1,1,0) \mapsto (0,1,0)
  \mapsto\\
&  (1,1,1) \mapsto (0,1,1) \mapsto (0,0,1) \mapsto (1,0,1),
\end{align*}
$T$ is
\begin{align*}
& (0,0,0,0)  \mapsto (1,0,0,0) \mapsto (1,1,0,0) \mapsto (0,1,0,0)
  \mapsto\\
&  (1,1,1,0) \mapsto (0,1,1,0) \mapsto (0,0,1,0) \mapsto (1,0,1,0),
\end{align*}
$\overline{T}$ is
\begin{align*}
&  (1,1,1,1) \mapsto (0,1,1,1) \mapsto (0,0,1,1) \mapsto  (1,0,1,1)
 \mapsto\\
& (0,0,0,1)  \mapsto (1,0,0,1) \mapsto (1,1,0,1) \mapsto  (0,1,0,1),
\end{align*}
and (\ref{eq.bigtr}) is 
\begin{align*}
& (0,0,0,0)  \mapsto (1,0,0,0) \mapsto (1,1,0,0) \mapsto (0,1,0,0)
  \mapsto\\
&  (1,1,1,0) \mapsto (0,1,1,0) \mapsto (0,0,1,0) \mapsto (1,0,1,0)
  \mapsto\\
&  (1,1,1,1) \mapsto (0,1,1,1) \mapsto (0,0,1,1) \mapsto  (1,0,1,1)
 \mapsto\\
& (0,0,0,1)  \mapsto (1,0,0,1) \mapsto (1,1,0,1) \mapsto  (0,1,0,1).
\end{align*}

It may be interesting to note that our $\Phi_n$ can be specified in
yet another way.  Every one-to-one mapping $r:\{0,1\}^n\rightarrow
\{0,1,\ldots 2^n-1\}$ generates a mapping $\Phi:\{0,1\}^n\rightarrow
\{0,1\}^n$ through the formula
\[
\Phi(x)=r^{-1}(r(x)+1 \bmod 2^n).
\]
We have, for $s=r^{-1}(0)$ and for all $t=0,1,\ldots 2^n-1$, 
\[
x=\Phi^t(s) \Leftrightarrow t=r(x)
\]
(this can be checked by straightforward induction on $t$); it follows
that $\Phi$ has period $2^n$. Our $\Phi_n$ is generated by the mapping
$r_n:\{0,1\}^n\rightarrow \{0,1,\ldots 2^n-1\}$ defined by $r_n(x_1,x_2,\ldots
x_{n})=\sum_{j=1}^{n}c_j2^{j-1}$ with
\[
c_j  = \begin{cases}
\lvert x_{j}-x_{j+1}\rvert &\mbox{when } 1\le j<n,\\
x_n &\mbox{when }  j=n.
\end{cases}
\]
For instance,
\[
\begin{array}{l}
r_4(0,0,0,0)=0,\;r_4(0,0,0,1)=12,\;r_4(0,0,1,0)=6,\;r_4(0,0,1,1)=10,\\
r_4(0,1,0,0)=3,\;r_4(0,1,0,1)=15,\;r_4(0,1,1,0)=5,\;r_4(0,1,1,1)=9,\\
r_4(1,0,0,0)=1,\;r_4(1,0,0,1)=13,\;r_4(1,0,1,0)=7,\;r_4(1,0,1,1)=11,\\
r_4(1,1,0,0)=2,\;r_4(1,1,0,1)=14,\;r_4(1,1,1,0)=4,\;r_4(1,1,1,1)=8
\end{array}
\]
This mapping $r_n$ is one-to-one for every $n$: from the integer
$r_n(x_1,x_2,\ldots x_{n})$, we can recover its binary encoding
$(c_1,c_2,\ldots c_{n})$, from which we can recover first $x_{n}$,
then $x_{n-1}$, and so on until $x_{1}$. 

To see that $r_n(\Phi_n(x))=r_n(x)+1 \bmod 2^n$, observe that (i) if
$x$ is the alternating vector $(\ldots, 0,1,0,1$), then $r_n(x)=2^n-1$
and (ii) for all other vectors $(x_1,x_2,\ldots ,x_{n})$ in
$\{0,1\}^n$, the largest subscript $m$ such that $(x_1,x_2,\ldots
,x_m)$ is an alternating vector equals the smallest subscript $m$ such
that $c_m=0$, in which case $r_n(x_1,x_2,\ldots
x_{n})+1=\sum_{j=1}^{n}d_j2^{j-1}$ with
\[
d_j  = \begin{cases}
0 &\mbox{when } 1\le j<m,\\
1 &\mbox{when }  j=m,\\
c_j &\mbox{when }  m<j\le n
\end{cases}
\]
and, with $(y_1,\ldots y_{n})$ defined by (\ref{eq.defphi}), we
have $r_n(y_1,\ldots y_{n})=\sum_{j=1}^{n}d_j2^{j-1}$.

\section{How many $n$-neuron McCulloch-Pitts\\ networks with period $2^n$ are
there?}

When $n=2$, there are just two such networks,
\[
(x_1,x_2)\mapsto (H(-x_2), H(x_1-1)) \;\mbox{ and }\;
(x_1,x_2)\mapsto (H(x_2-1), H(-x_1)) 
\]
with $H$ the Heaviside step function defined in
Section~\ref{sec.intro}. The first of these two networks is $\Phi_2$;
the second is produced from $\Phi_2$ by switching the two coordinates.

Next, let us consider $n=3$. By permuting the three coordinates,
$\Phi_3$ produces six distinct McCulloch-Pitts networks:
\begin{align*}
000 \rightarrow 100 \rightarrow 110 \rightarrow 010 \rightarrow 111 \rightarrow 011 \rightarrow 001 \rightarrow 101 \rightarrow 000,\\
000 \rightarrow 010 \rightarrow 110 \rightarrow 100 \rightarrow 111 \rightarrow 101 \rightarrow 001 \rightarrow 011 \rightarrow 000,\\
000 \rightarrow 001 \rightarrow 101 \rightarrow 100 \rightarrow 111 \rightarrow 110 \rightarrow 010 \rightarrow 011 \rightarrow 000,\\
000 \rightarrow 001 \rightarrow 011 \rightarrow 010 \rightarrow 111 \rightarrow 110 \rightarrow 100 \rightarrow 101 \rightarrow 000,\\
000 \rightarrow 010 \rightarrow 011 \rightarrow 001 \rightarrow 111 \rightarrow 101 \rightarrow 100 \rightarrow 110 \rightarrow 000,\\
000 \rightarrow 100 \rightarrow 101 \rightarrow 001 \rightarrow 111 \rightarrow 011 \rightarrow 010 \rightarrow 110 \rightarrow 000.
\end{align*}
(Here, we write $x_1x_2x_3$ for $(x_1,x_2,x_3)$ and we appeal to the
fact that an $n$-neuron McCulloch-Pitts network with period $2^n$ is
fully specified by its trajectory.) By flipping bits
($0\leftrightarrow 1$), these six McCulloch-Pitts networks produce an
additional eighteen McCulloch-Pitts networks: flipping the first bit
(and rotating the resulting trajectory to make $000$ its starting
point), we get
\begin{align*}
000 \rightarrow 010 \rightarrow 110 \rightarrow 011 \rightarrow 111 \rightarrow 101 \rightarrow 001 \rightarrow 100 \rightarrow 000,\\
000 \rightarrow 011 \rightarrow 001 \rightarrow 101 \rightarrow 111 \rightarrow 100 \rightarrow 110 \rightarrow 010 \rightarrow 000,\\
000 \rightarrow 011 \rightarrow 010 \rightarrow 110 \rightarrow 111 \rightarrow 100 \rightarrow 101 \rightarrow 001 \rightarrow 000,\\
000 \rightarrow 001 \rightarrow 100 \rightarrow 101 \rightarrow 111 \rightarrow 110 \rightarrow 011 \rightarrow 010 \rightarrow 000,\\
000 \rightarrow 010 \rightarrow 100 \rightarrow 110 \rightarrow 111 \rightarrow 101 \rightarrow 011 \rightarrow 001 \rightarrow 000,\\
000 \rightarrow 001 \rightarrow 101 \rightarrow 011 \rightarrow 111 \rightarrow 110 \rightarrow 010 \rightarrow 100 \rightarrow 000;
\end{align*}
flipping the second bit (and rotating the resulting trajectory to make
$000$ its starting point), we get
\begin{align*}
000 \rightarrow 101 \rightarrow 001 \rightarrow 011 \rightarrow 111 \rightarrow 010 \rightarrow 110 \rightarrow 100 \rightarrow 000,\\
000 \rightarrow 100 \rightarrow 110 \rightarrow 101 \rightarrow 111 \rightarrow 011 \rightarrow 001 \rightarrow 010 \rightarrow 000,\\
000 \rightarrow 001 \rightarrow 010 \rightarrow 011 \rightarrow 111 \rightarrow 110 \rightarrow 101 \rightarrow 100 \rightarrow 000,\\
000 \rightarrow 101 \rightarrow 100 \rightarrow 110 \rightarrow 111 \rightarrow 010 \rightarrow 011 \rightarrow 001 \rightarrow 000,\\
000 \rightarrow 001 \rightarrow 011 \rightarrow 101 \rightarrow 111 \rightarrow 110 \rightarrow 100 \rightarrow 010 \rightarrow 000,\\
000 \rightarrow 100 \rightarrow 010 \rightarrow 110 \rightarrow 111 \rightarrow 011 \rightarrow 101 \rightarrow 001 \rightarrow 000;
\end{align*}
flipping the third bit (and rotating the resulting trajectory to make
$000$ its starting point), we get
\begin{align*}
000 \rightarrow 100 \rightarrow 001 \rightarrow 101 \rightarrow 111 \rightarrow 011 \rightarrow 110 \rightarrow 010 \rightarrow 000,\\
000 \rightarrow 010 \rightarrow 001 \rightarrow 011 \rightarrow 111 \rightarrow 101 \rightarrow 110 \rightarrow 100 \rightarrow 000,\\
000 \rightarrow 100 \rightarrow 101 \rightarrow 110 \rightarrow 111 \rightarrow 011 \rightarrow 010 \rightarrow 001 \rightarrow 000,\\
000 \rightarrow 010 \rightarrow 011 \rightarrow 110 \rightarrow 111 \rightarrow 101 \rightarrow 100 \rightarrow 001 \rightarrow 000,\\
000 \rightarrow 110 \rightarrow 100 \rightarrow 101 \rightarrow 111 \rightarrow 001 \rightarrow 011 \rightarrow 010 \rightarrow 000,\\
000 \rightarrow 110 \rightarrow 010 \rightarrow 011 \rightarrow 111 \rightarrow 001 \rightarrow 101 \rightarrow 100 \rightarrow 000.
\end{align*}
Since $\Phi_3$ is antisymmetric, flipping two or three bits produces
no additional McCulloch-Pitts networks.  To summarize, $\Phi_3$
produces an isomorphism class of $24$ networks, where `isomorphism'
means any composition of permuting subscripts and flipping bits. The
McCulloch-Pitts network defined by
\[
(x_1,x_2,x_3)\mapsto 
(H(-x_1+x_2-x_3), H(-x_1-x_2-x_3+1), H(-x_1+x_2+x_3-1))
\]
does not belong to this class: its trajectory is 
\[
000 \mapsto 110 \mapsto 100 \mapsto 010 \mapsto 111 \mapsto 001 \mapsto 011 \mapsto 101 \mapsto 000.
\]
By permuting subscripts and flipping bits, this new network produces a
new isomorphism class of $24$ networks. Computer search shows that
there are no $3$-neuron McCulloch-Pitts networks with period $8$ other
than these $48$ networks in these $2$ isomorphism classes.

Additional computer search shows that there are precisely $9984$
distinct $4$-neuron McCulloch-Pitts networks with period $16$ and that
these networks come in $56$ distinct isomorphism classes.  It seems
that the number of isomorphism classes of $n$-neuron McCulloch-Pitts
networks with period $2^n$ grows rapidly with $n$.

\section{Pseudorandom number generators}

Our $\Phi_n$ has period $2^n$, as long as could possibly be, but it is
not quite irregular, disorderly, and apparently unpredictable. To
point out two of its blatant blemishes, let $y_i$ denote the $i$-th
bit of a vector $y$ in $\{0,1\}^n$.

Our recursive description of the trajectory
\[
(0,0,\ldots ,0)\mapsto \Phi_n(0,0,\ldots
,0)\mapsto \ldots \Phi_n^{N-1}(0,0,\ldots ,0) 
\]
with $N=2^n$ implies first that 
\[
(0,0,\ldots ,0)\ne \Phi_n(0,0,\ldots ,0), \;\;\; \Phi_n^{N-2}(0,0,\ldots ,0)\ne \Phi_n^{N-1}(0,0,\ldots ,0) 
\]
and then that 
\begin{equation}\label{eq.blemish1}
x_1 = \Phi_n(x)_1 \;\Rightarrow \; \Phi_n(x)_1\ne \Phi_n^2(x)_1.
\end{equation}
Our recursive description of the trajectory also implies that for all
$i=1,2,\ldots ,n$, the trajectory splits up into segments of length
$2^{i-1}$ so that the $i$-th bit of $\Phi_n^{t}(0,0,\ldots ,0)$ is
constant in each segment; it follows that
\begin{equation}\label{eq.blemish2}
  \mbox{ for all $i=2,3,\ldots ,n$, }\;\;\;
x_i \ne \Phi_n(x)_i \;\Rightarrow \; \Phi_n(x)_i= \Phi_n^2(x)_i.
\end{equation}
Properties (\ref{eq.blemish1}), (\ref{eq.blemish2}) make $\Phi_n$ far
from irregular, disorderly, and apparently unpredictable: in a random
permutation $\Phi$ of $\{0,1\}^n$, we would expect
$x_1=\Phi(x)_1=\Phi^2(x)_1$ about $25\%$ of the time and, for each 
$i=2,3,\ldots ,n$, we would expect $x_i\ne \Phi(x)_i\ne \Phi^2(x)_i$
about $25\%$ of the time.

By definition, no computable function $g:X\rightarrow X$ (where $X$ is a
finite set of numbers) can generate random numbers in the sequence $s,
g(s), g^2(s), \ldots$: if $g(x)$ can be computed, then it is not
unpredictable. (For an exposition of the concept of randomness, we
recommend~\cite{Gac09}.) Functions $g:X\rightarrow X$ that seem to generate
random numbers are called {\em pseudorandom number generators.\/} 
Since each vector in $\{0,1\}^n$ is a binary encoding of an $n$-bit
nonnegative integer, every mapping $\Phi:\{0,1\}^n\rightarrow \{0,1\}^n$
induces a mapping $\Phi^{\ast}:\{0,1,\ldots , 2^n-1\}\rightarrow \{0,1,\ldots
, 2^n-1\}$, and so every irregular, disorderly, apparently
unpredictable McCulloch-Pitts network $\Phi$ induces a pseudorandom
number generator $\Phi^{\ast}$.  Let us write
\[
X_n=\{k/2^n: k=0,1,\ldots ,2^n-1\}.
\]
Scaling down $\Phi^{\ast}$ by the factor of $2^n$, we get a mapping
$g_\Phi:X_n\rightarrow X_n$; for large values of $n$, this mapping approximates a
pseudorandom number generator $g:[0,1)\rightarrow [0,1)$. 

Long period alone does not suffice to make a pseudorandom number
generator acceptable; in order to be acceptable, it has to pass a
number of statistical tests. A number of these tests is commonly
agreed on; our favourite ones are implemented in the software library
TestU01 of L'Ecuyer and Simard\cite{EcuSim07,EcuSim09}. In particular,
TestU01 includes batteries of statistical tests for sequences of
uniform random numbers in the interval $[0,1)$. The least stringent of
them, {\tt SmallCrush}, consists of the following ten tests:
\begin{center}
{\footnotesize
\begin{tabular}{l}
\rule{0pt}{12pt}
\verb+ 1 smarsa_BirthdaySpacings+\\
\rule{0pt}{12pt}
\verb+ 2 sknuth_Collision+\\
\rule{0pt}{12pt}
\verb+ 3 sknuth_Gap+\\
\rule{0pt}{12pt}
\verb+ 4 sknuth_SimpPoker+\\
\rule{0pt}{12pt}
\verb+ 5 sknuth_CouponCollector+\\
\rule{0pt}{12pt}
\verb+ 6 sknuth_MaxOft+\\
\rule{0pt}{12pt}
\verb+ 7 svaria_WeightDistrib+\\
\rule{0pt}{12pt}
\verb+ 8 smarsa_MatrixRank+\\
\rule{0pt}{12pt}
\verb+ 9 sstring_HammingIndep+\\
\rule{0pt}{12pt}
\verb+10 swalk_RandomWalk1+\\
\end{tabular}
}
\end{center}
Each of these tests produces a number $p$; a typical range where the
test is considered passed is $0.001\le p \le 0.999$.

Is there a McCulloch-Pitts network $\Phi$ whose pseudorandom number
generator $g_\Phi$ passes all ten tests of {\tt SmallCrush}? Our
networks $\Phi_n$ fail all ten.

We conclude with an example of a $4$-neuron McCulloch-Pitts
network which has neither of the two properties (\ref{eq.blemish1}),
(\ref{eq.blemish2}). This network is defined by
\begin{align*}
(x_1,x_2,x_3,x_4) \mapsto & \;(H(-x_1-x_2-2x_3-x_4+2), H(-x_1-x_2+x_3+2x_4-1),\\
& \phantom{(}\;H( -x_1+2x_2-x_3+x_4-1), H(2x_1-x_2-x_3+x_4-1))
\end{align*}
and its trajectory is 
\begin{align*}
& (0,0,0,0) \mapsto (1,0,0,0) \mapsto (1,0,0,1) \mapsto (1,1,0,1) \mapsto \\
& (0,0,1,1) \mapsto (0,1,0,0) \mapsto (1,0,1,0) \mapsto (0,0,0,1) \mapsto \\
& (1,1,1,1) \mapsto (0,1,1,1) \mapsto (0,1,1,0) \mapsto (0,0,1,0) \mapsto \\
& (1,1,0,0) \mapsto (1,0,1,1) \mapsto (0,1,0,1) \mapsto (1,1,1,0) \mapsto \\ 
& (0,0,0,0) \mapsto (1,0,0,0) \mapsto \ldots \;\; .
\end{align*}
For each subscript $i=1,2,3$, the triples
$(x_i,\Phi(x)_i,\Phi^2(x)_i)$ run through the entire set $\{0,1\}^3$;
the triples $(x_4,\Phi(x)_4,\Phi^2(x)_4)$ miss only two values, which
are $(0,1,0)$ and $(1,0,1)$.

\bigskip

\begin{center}
{\bf Acknowledgments}
\end{center}

\noindent 
This research was undertaken, in part, thanks to funding from the
Canada Research Chairs Program. In addition to expressing their
gratitude for this support, the authors want to thank Mark\' eta
Vysko\v cilov\' a, MD for enlightening conversations about neurology.


\begin{thebibliography}{00}

\bibitem{Abb99} L.F.~Abbott, Lapicque's introduction of the
  integrate-and-fire model neuron (1907), Brain Research Bulletin 50
  (1999) 303–-304.

\bibitem{BroHol04}
T.R.~Browne, G.L.~Holmes, Handbook of epilepsy, 3rd Edition,
Lippincott Williams \& Wilkins, Philadelphia, 2004.

\bibitem{ILAE81}
Commission on Classification and Terminology of the International
League Against Epilepsy, Proposal for revised clinical and
electroencephalographic classification of epileptic seizures, 
Epilepsia 22 (1981) 489--501. 

\bibitem{EP97} J.~Engel Jr., T.A~Pedley, What is epilepsy?, Chapter 1
  in: J.~Engel Jr., T.A~Pedley (Eds.), Epilepsy. A Comprehensive
  Textbook, Lippincott-Raven, Philadelphia, 1997, pp. 1--7.

\bibitem{Fis99}
B.J.~Fisch, Fisch and Spehlmann's EEG Primer. Basic principles of
digital and analog EEG, 3rd Edition, Elsevier, 1999.

\bibitem{Gac09}
P.~G\' acs, Lecture notes on descriptional complexity and
randomness. Technical
report, Boston University, Computer Science Dept., Boston, MA 02215,
2009.\\
\verb+www.cs.bu.edu/faculty/gacs/papers/ait-notes.pdf.+

\bibitem{Gar70}
M.~Gardner, Mathematical games: The fantastic
combinations of John Conway's new solitaire game ``life'',
Scientific American 223 (1970), 120--123.

\bibitem{GerKis02} W.~Gerstner and W.M.~Kistler, Spiking Neuron
  Models.  Single Neurons, Populations, Plasticity, Cambridge
  University Press, 2002.\\
\verb+http://icwww.epfl.ch/~gerstner/SPNM/SPNM.html+

\bibitem{HH52} A.L.~Hodgkin, A.F.~ Huxley, A quantitative description
  of membrane current and its application to conduction and excitation
  in nerve,  J. Physiol. 117 (1952), 500–-544.

\bibitem{Hut11}
A.~Hutt, The population firing rate in the presence of GABAergic
tonic inhibition in single neurons and application to general
anaesthesia, Cognitive Neurodynamics 6 (2012) 227--237. 

\bibitem{KocSeg98}
C.~Koch, I.~Segev (Eds.), Methods in Neuronal Modeling. From Ions to Networks.
2nd Edition, The MIT Press, Cambridge, 1998. 

\bibitem{Lap07} L.~Lapicque, Recherches quantitatives sur l'excitation
  \'electrique des nerfs trait\'ee comme une
  polarization. J. Physiol. Pathol. Gen. 9 (1907) 620--635.

\bibitem{LegMaa05} R.~Legenstein, W.~Maass, What makes a dynamical
  system computationally powerful?  In: S.~Haykin, J.C.~Principe,
  T.~Sejnowski, J.~McWhirter (Eds.), New Directions in Statistical
  Signal Processing: From Systems to Brain, pp.~127--154, The MIT
  Press, Cambridge, 2005.

\bibitem{EcuSim07}
P.~L'Ecuyer, R.~Simard, TestU01: A C library for
empirical testing of random number generators,  ACM Transactions on
Mathematical Software, 33 (2007), Article 22, 40 pages.

\bibitem{EcuSim09}
P.~L'Ecuyer, R.~Simard, TestU01: A software library in ANSI C
for empirical testing of random number generators. User’s guide, compact version.
(Version: August 17, 2009).\\
\verb+http://www.iro.umontreal.ca/~simardr/testu01/guideshorttestu01.pdf+

\bibitem{MaaBis01} W.~Maass, C.~M. Bishop, Pulsed Neural
  Networks. The MIT Press, Cambridge, 2001.

\bibitem{McCPit43} W.S.~McCulloch, W.Pitts, A logical calculus of the
  ideas immanent in nervous activity, Bulletin of Mathematical
  Biophysics 5 (1943) 115--133.

\bibitem{MBCHPR02} P.C.~McGuire, H.~Bohr, J.W.~Clark, R.~Haschke,
  C.L.~Pershing, J.~Rafelski, Threshold disorder as a source of
  diverse and complex behavior in random nets, Neural Networks 15
  (2002), 1243–-1258.

\bibitem{MGS11}
H.~Markram, W.~Gerstner, P,J.~Sj\" ostr\" om, A history of
spike-timing-dependent plasticity, Frontiers in Synaptic Neuroscience
3 (2011) 1--24.

\bibitem{Nie99}
E.~Niedermayer, Eplileptic seizure disorders, Chapter 27 in:
E.~Niedermayer, F.~Lopes da Silva (Eds.), Electroencephalography, 4th
Edition, Williams \& Wilkins, Baltimore, 1999, pp. 476--585.

\bibitem{RWRB97}
F.~Rieke, D.~Warland, R.~de de Ruyter van Steveninck, W.~Bialek,  
Spikes: Exploring the Neural Code, The MIT Press, Cambridge, 1997.

\bibitem{Roj96} 
R.~Rojas, Neural Networks. A Systematic Introduction, Springer,
Heidelberg, 1996.

\bibitem{SRK95} 
K.-Y.~ Siu, V.~Roychowdhury, T.~Kailath, Discrete neural
computation. A theoretical foundation, Prentice Hall PTR, Englewood
Cliffs, 1995.

\bibitem{Tuc88}
H.C.~Tuckwell, Introduction to theoretical neurobiology, 
Cambridge University Press, 1988.

\bibitem{Wes00}
G.L.~Westbrook, Seizures and epilepsy, Chapter 46 in: E.R.~Kandel,
J.H.~Schwartz, T.M.~Jessell, Principles of Neural Science, 4th
Edition, McGraw-Hill, New York, 2000, pp. 910--935.

\bibitem{WHO} World Health Organization, WHO $|$ Epilepsy,\\
\verb+http://www.who.int/mental_health/neurology/epilepsy/en/+\\
accessed on 15 July 2012.

\bibitem{Yus11}
R.~Yuste, Dendritic spines and distributed circuits, Neuron 71 (2011) 772--781. 

\end{thebibliography}
\end{document}